\theoremstyle{plain}
\newtheorem{theorem}                {Theorem}      [section]
\newtheorem*{theorem*}                {Theorem \ref{thm:appl}}
\newtheorem{proposition}  [theorem]  {Proposition}
\newtheorem{corollary}    [theorem]  {Corollary}
\begin{document}

\title[The $r$-mean curvature and rigidity of compact hypersurfaces]
{The $r$-mean curvature and rigidity of compact hypersurfaces in the Euclidean space}

\author[Costa-Filho]{Wagner Oliveira Costa-Filho}
\address{Campus Arapiraca, Federal University of Alagoas, CEP 57309-005, 
Arapiraca,  Alagoas, Brazil}
\email{fcow@bol.com.br}

\subjclass[2010]{Primary 53C42; Secondary 53C24, 53C44.}
\date{\today}
\keywords{isometric immersions; higher order mean curvature; self-shrinkers; 
$\lambda$-hypersurfaces; Minkowski integral formulas.}

\begin{abstract}  
In this paper, we characterize round spheres in the Euclidean space under some suitable conditions
on the  $r$-mean curvature.
\end{abstract}

\maketitle

\section{Introduction}

Let  $x:M^n\to\mathbb{R}^{n+1}$ be an isometric immersion of a orientable Riemannian manifold 
$M^n$ in the Euclidean space   $\mathbb{R}^{n+1}$. 
Denote by $A$ the second fundamental form of the hypersurface with respect to a unit normal vector
field $N$ globally defined on $M$.
The  $r$-mean curvature of  $M$ is defined by 
$$H_r=\binom{n}{r}^{-1}S_r,$$
where $S_r$ is the  $r$-elementary symmetric function of the eigenvalues of $A$, for 
 $r=1,\ldots ,n$, and  $S_0=1$.

The Newton transformations  $P_r$ related to the immersion $x$ 
are the linear maps defined recursively by  $P_0=I$ and $P_r=S_rI-AP_{r-1}$,
when $1\le r \le n$. 
Associated to each $P_r$  we have the linear differential operators of second order 
$L_r$ given by
$$L_r(u)=\textrm{tr}(P_r\nabla^2 u).$$
In the right hand side of this equation,  $\nabla^2u$ stands for the Hessian operator
of a smooth function  $u$  on $M$. 
It is well known that the operators $L_r$ are elliptic if and only if 
the corresponding  Newton transformations  $P_r$ 
are positively defined. 
Moreover, 
$$L_r(u)=\textrm{div}(P_r(\nabla u)), $$ 
where  $\nabla u$ is the gradient of   $u$ and $\textrm{div}$ denotes the divergent operator of a vector
field on  $M$. 
Classically, the operators $P_r$ and $L_r$ come from the variational aspects related to the
problem of minimizing certain $r$-area functionals  of the immersion $x$. 
For more details see seminal paper of Rosenberg  \cite{rosenberg1993}.

A still challenging topic in differential geometry is the rigidity of of hypersurfaces $M^n$
in the Euclidean space under some natural conditions on the topology and under 
suitable analytical assumptions on  $H_r$ or $L_r$, for some  $r=0,...,n$. 
In this context it is expected  to prove that the immersion is in fact totally umbilical. 
We point out that there exists a vast literature on this subject, exploring the cases where
$x$ is an embedding or just an immersion. For instance, the celebrated work of Aleksandrov \cite{Aleksandrov}
claims that a closed (compact and with empty boundary) embedded  hypersurface 
with constant mean curvature is a round sphere. This result was generalized by Ros \cite{Ros87}
in the case that some higher order mean curvature is constant. 
In the case that $x$ is an immersion and $M$ is a topological $2$-sphere in 
$\mathbb R^3$ with constant mean curvature,
 the also celebrated work of Hopf \cite{H} shows that $M$ a round sphere. It is well known that 
 for any integer $g\geq 1$ there are constant mean curvature surfaces with genus $g$ in
 $\mathbb R^3$ (see \cite{W} for $g=1$ and \cite{Ka2, Ka3} for $g\geq 2$).
 In higher dimensions,  Hsiang, Teng and Yu \cite{Hsiang} 
 proved the existence of topological spheres in $\mathbb R^{2n}$ 
 with constant mean curvature that are not round.

The aim of this work is to present new characterizations of the Euclidean sphere 
in terms of the behavior of the $r$-mean curvature $H_r$ when $x$ is
an  immersion. 

To state our first results, we denote by $\rho$ the support function of $x$, that is 
$\rho: M \to \mathbb{R}$,  $\rho = \langle x, N \rangle$.
Geometrically, $\rho(p)$ is the distance with sign from the origin $0\in\mathbb{R}^{n + 1}$ to the hyperplane tangent to $x(M)$ at $x(p)$.
Assuming that $\rho$ is non-negative, Deshmukh  \cite {deshmukh2013note} proved that the
mean curvature $H$ of $M$  is a solution to the Poisson equation $\Delta u = 1 + H_1 \rho$ 
if and only if, $M $ is isometric to a round sphere.
Our first theorem extends this result for the operators $L_r$.

\begin{theorem}
Let $x:M^n \to \mathbb {R}^{n + 1}$ be a closed hypersurface with non-negative support function. 
and such that operator $P_r$ is positively definite. 
Then the mean curvature $H_1$ satisfies the equation $ L_ru = 1 + H_1 \rho$ 
if and only if $M$ is isometric to a round sphere.
\end{theorem}

Assuming that $H_r$ is constant we obtain that

\begin{theorem}\label{t2}
Let $x: M^n \to \mathbb {R}^{n + 1}$ be a closed hypersurface with  non-negative support function.
Assume that,  for some $1 \le r \le{n-2}$, the operator $P_r$ is positively defined and  $H_r$ is constant.
Then the mean curvature $H_{r + 1}$ satisfies the equation $L_ru = H_r + H_{r + 1} \rho$ 
if and only if $M$ is isometric to a round sphere.
\end{theorem}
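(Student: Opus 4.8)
The reverse implication is immediate. On a round sphere every $H_k$ is constant, so $L_r H_{r+1}=0$; moreover the principal curvatures all equal a constant $\kappa$ and the support function satisfies $\rho=-1/\kappa$, whence $H_r+H_{r+1}\rho=\kappa^r(1+\kappa\rho)=0$. Thus both sides of the equation vanish and it holds trivially.

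For the direct implication I would begin with the basic identity obtained by applying $L_r$ to $g=\tfrac12|x|^2$. Writing $x=x^{\top}+\rho N$ and differentiating $\bar\nabla_X x=X$ gives $\nabla g=x^{\top}$ and $\nabla^2 g=I+\rho A$; hence, using the classical trace identities $\mathrm{tr}(P_r)=(n-r)S_r$ and $\mathrm{tr}(AP_r)=(r+1)S_{r+1}$ together with $(n-r)\binom{n}{r}=(r+1)\binom{n}{r+1}$,
$$L_r g=\mathrm{tr}(P_r)+\rho\,\mathrm{tr}(AP_r)=(r+1)\binom{n}{r+1}\bigl(H_r+H_{r+1}\rho\bigr).$$
Setting $c=\bigl[(r+1)\binom{n}{r+1}\bigr]^{-1}$ and combining this with the hypothesis $L_r H_{r+1}=H_r+H_{r+1}\rho$, the function $w=H_{r+1}-c\,g$ satisfies $L_r w=0$.

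Since $P_r$ is positive definite, $L_r=\mathrm{div}(P_r\nabla\,\cdot\,)$ is an elliptic operator in divergence form without zero-order term; multiplying $L_r w=0$ by $w$ and integrating over the closed manifold yields $\int_M\langle P_r\nabla w,\nabla w\rangle\,dM=0$, forcing $\nabla w\equiv 0$. Therefore $H_{r+1}=\tfrac{c}{2}|x|^2+\text{const}$ is an affine function of $|x|^2$ with positive leading coefficient. In particular $H_{r+1}$ attains its extrema exactly where $|x|^2$ does; at such a critical point $x^{\top}=0$, so $x=\rho N$ with $\rho=|x|$, and the sign condition $\nabla^2 g=I+\rho A\preceq 0$ (resp.\ $\succeq 0$) at the maximum (resp.\ minimum) pins down the eigenvalues of $A$ there.

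The remaining and hardest step is to upgrade the relation $H_{r+1}=\tfrac c2|x|^2+\text{const}$ to total umbilicity. Here I would use that $H_r$ is constant together with the Minkowski integral formulas $\int_M(H_{r-1}+H_r\rho)\,dM=0$ and $\int_M(H_r+H_{r+1}\rho)\,dM=0$ (the vanishing of $\int_M L_s g\,dM$ for $s=r-1,r$), and the pointwise Newton--Maclaurin inequality $H_r^2\ge H_{r-1}H_{r+1}$, which holds because $P_r$ lies in the positive G\aa{}rding cone and becomes an equality at a point if and only if that point is umbilic. The goal is to assemble from these relations, the affine expression for $H_{r+1}$, and the hypothesis $\rho\ge0$ an integral $\int_M(H_r^2-H_{r-1}H_{r+1})\,\omega\,dM=0$ with a nonnegative weight $\omega$; the fixed sign of the integrand then forces equality in the Newton inequality everywhere, i.e.\ $A=\lambda I$ at every point. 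Since a closed, connected, totally umbilical hypersurface of $\mathbb{R}^{n+1}$ is a round sphere, this finishes the proof. The delicate point I expect to be decisive is the sign bookkeeping in this last step: arranging, through the positivity of $P_r$, the nonnegativity of $\rho$, and the extremal analysis of $|x|^2$ above, that the chosen weight $\omega$ keeps the integrand of one sign, so that its vanishing integral yields pointwise umbilicity.
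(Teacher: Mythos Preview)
Your first two steps are sound: the identity $L_r g=(r+1)\binom{n}{r+1}(H_r+H_{r+1}\rho)$ combined with the hypothesis does give $L_r w=0$ for $w=H_{r+1}-cg$, and ellipticity of $L_r$ forces $w$ to be constant. But from that point on your argument is only a wish, not a proof: you announce that you will combine the affine relation $H_{r+1}=\tfrac{c}{2}|x|^2+\text{const}$ with the Newton inequality $H_r^2\ge H_{r-1}H_{r+1}$ and the Minkowski formulas at levels $r-1,r$ to produce an integral of the form $\int_M(H_r^2-H_{r-1}H_{r+1})\,\omega\,dM=0$, but you never exhibit the weight $\omega$ or carry out the bookkeeping. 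In fact those particular indices do not mesh well: the Minkowski identities at levels $r-1$ and $r$ give you $\int H_{r-1}=-H_r\int\rho$ and $H_r|M|=-\int H_{r+1}\rho$, neither of which helps control the cross term $\int H_{r-1}H_{r+1}\,\omega$ that appears in your target integral, and the affine relation injects $|x|^2$, which is foreign to the Minkowski formulas. So the final step, which you yourself flag as ``the remaining and hardest step'', is a genuine gap.

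The paper closes the argument in a single stroke by multiplying the equation $L_rH_{r+1}=H_r+H_{r+1}\rho$ by $H_{r+1}$ and integrating: this yields
\[
-\int_M\langle P_r\nabla H_{r+1},\nabla H_{r+1}\rangle\,dM=\int_M H_rH_{r+1}\,dM+\int_M H_{r+1}^2\rho\,dM.
\]
The key is to use the Newton inequality one index \emph{up}, namely $H_{r+1}^2\ge H_rH_{r+2}$; since $\rho\ge0$ and $H_r$ is constant, the right-hand side is bounded below by $H_r\int_M(H_{r+1}+H_{r+2}\rho)\,dM$, which vanishes by the Minkowski formula at level $r+1$. The left-hand side is $\le0$ because $P_r$ is positive, so both sides are zero: $H_{r+1}$ is constant and $H_{r+1}^2=H_rH_{r+2}$ everywhere, whence $M$ is totally umbilical. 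In short, the missing idea is to shift the Newton and Minkowski indices from $r$ to $r+1$ and to integrate against $H_{r+1}$ rather than against $w$.
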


The positivity of the operator $P_r$ is a natural analytical condition, which is automatically verified 
when $r=0$. It is an interesting problem to prove Theorem \ref{t2} when $H_r$ is not constant.

\medskip
The techniques used to prove the theorems above can be applied to self-shrinkers of
the Euclidean space.  We recall  that $M^n\subset \mathbb R^{n+1}$ 
is a \textit{self-shrinker} if the equation is satisfied
\begin{equation}
    H = - \frac{\rho}{2},
\end{equation}
where $H$ is the \emph{non-normalized} mean curvature of $M$.

Self-shrinkers form an important class of solutions for the mean curvature flow 
and stand out in the study of the so-called type I singularities. 
See, for example, Colding and Minicozzi \cite{colding2012generic}.

Some basics examples of self-shrinkers are hyperplanes passing through the origin, minimal cones, round spheres $\mathbb{S}^n(\sqrt{2n})$ and cylinders $\mathbb{S}^k(\sqrt{2k})\times \mathbb{R}^{n-k}$, for $k = 1, ..., n-1$.

In \cite{guo2018scalar}, Guo obtained some gap theorems for closed self-shrinkers and concluded
that  if  the  scalar curvature of such hypersurfaces is constant, then they are isometric to a round sphere. 
In the following, we present a direct and more general result.

\begin{theorem}\label{t3}
Let $x: M^n \to \mathbb{R}^{n + 1}$ be a closed self-shrinker with $H_{r + 1}$ 
constant for some $1\le r\le n-1$. Then $M = \mathbb{S}^n(\sqrt{2n})$.
\end{theorem}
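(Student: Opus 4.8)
The plan is to run the integration-by-parts scheme behind Theorems \ref{t2} and the preceding result, now adapted to the Gaussian weight that is natural for self-shrinkers. Write $H=S_1$ for the non-normalized mean curvature, so the self-shrinker equation reads $S_1=-\rho/2$. The starting point is the identity
\[
L_r\Big(\tfrac12|x|^2\Big)=\mathrm{div}\big(P_r x^\top\big)=(n-r)S_r+(r+1)\,\rho\,S_{r+1},
\]
where $x^\top=\nabla(\tfrac12|x|^2)$ is the tangential part of the position vector; this is immediate from $\nabla^2(\tfrac12|x|^2)=I+\rho A$, the trace relations $\mathrm{tr}\,P_r=(n-r)S_r$ and $\mathrm{tr}(P_rA)=(r+1)S_{r+1}$, and $\mathrm{div}\,P_r=0$. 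Integrating against the weight $e^{-|x|^2/4}$ and using the weighted divergence theorem (whose drift field is exactly $\tfrac12 x^\top$) converts this into a weighted Minkowski formula in which the only new term is the quadratic $\tfrac12\langle P_r x^\top,x^\top\rangle$.

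Next I would eliminate $\rho$ through the self-shrinker equation $\rho=-2S_1$ and bring in the hypothesis that $H_{r+1}$, equivalently $S_{r+1}$, is constant. The remaining obstruction is the quadratic term $\langle P_r x^\top,x^\top\rangle$. The crucial observation is that differentiating $\rho=\langle x,N\rangle$ gives $\nabla\rho=-A x^\top$, so the self-shrinker equation yields $A x^\top=2\nabla S_1$; together with $\mathrm{div}\,P_r=0$ and the standard Newton-tensor identity $\langle\nabla S_{r+1},X\rangle=\mathrm{tr}(P_r\nabla_XA)$, this lets me integrate the quadratic term by parts a second time and trade it for lower-order curvature integrals. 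After the cancellations, and using that $S_{r+1}$ is constant, the identity should collapse to the integral over $M$ of a single algebraic expression in the symmetric functions $S_0,\dots,S_{r+2}$.

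The rigidity then comes from the Newton--Maclaurin inequalities: at each point one has $H_{r+1}^2\ge H_rH_{r+2}$ (with the Maclaurin chain $H_1\ge H_2^{1/2}\ge\cdots$ on the G\aa rding cone), and equality forces $\kappa_1=\cdots=\kappa_n$. I would arrange the final integrand to have a fixed sign by these inequalities; since the total integral vanishes, the integrand vanishes identically, so $M$ is totally umbilical everywhere. A connected closed totally umbilical hypersurface of $\mathbb{R}^{n+1}$ is a round sphere, and inserting a sphere of radius $R$ into $S_1=-\rho/2$ gives $n/R=R/2$, i.e. $R=\sqrt{2n}$; hence $M=\mathbb{S}^n(\sqrt{2n})$. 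I would also use that a closed hypersurface has an elliptic point (e.g. a point of maximal $|x|$, where all principal curvatures share one sign), so the constant $S_{r+1}$ is a nonzero positive constant, placing $M$ in the positive G\aa rding cone and making the equality case of Newton's inequality applicable.

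The main obstacle is the middle step. Unlike the earlier theorems, here I may \emph{not} assume that $P_r$ is positive definite or that $\rho$ has a fixed sign, so I cannot simply discard $\langle P_r x^\top,x^\top\rangle$ as a favorably signed term; making it disappear genuinely requires the self-shrinker identity $Ax^\top=2\nabla S_1$ together with careful bookkeeping to ensure the surviving integrand is sign-definite. The second delicate point is the equality case of Newton's inequality, which detects umbilicity only where $H_{r+1}\neq 0$; this is what the elliptic-point argument is meant to secure, by upgrading "constant" to "positive constant" for $S_{r+1}$.
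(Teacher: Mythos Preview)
Your proposal has a genuine gap at precisely the place you yourself flag as ``the main obstacle.'' You never actually show that the weighted quadratic term $\tfrac12\int_M\langle P_r x^\top,x^\top\rangle\,e^{-|x|^2/4}$ can be eliminated; you only assert that after a second integration by parts the identity ``should collapse'' to a sign-definite integrand. Without $P_r>0$ or a sign on $\rho$, that term has no a~priori sign, and the manipulation you sketch (use $Ax^\top=2\nabla S_1$ together with $P_r=S_rI-P_{r-1}A$) merely trades it for $S_r|x^\top|^2$ plus terms involving $P_{r-1}\nabla S_1$, which are no easier to control. As written, the argument is incomplete at its central step.

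The paper sidesteps this difficulty by a much simpler, \emph{unweighted} route. Since $M$ is closed, no Gaussian weight is needed; instead one integrates the identity for $L_r(\rho)$ (equation~\eqref{lr}),
\[
L_r(\rho)=-(r+1)S_{r+1}-\big(S_1S_{r+1}-(r+2)S_{r+2}\big)\rho-\langle\nabla S_{r+1},x^\top\rangle,
\]
plainly over $M$. The hypothesis that $H_{r+1}$ is constant kills the gradient term $\langle\nabla S_{r+1},x^\top\rangle$ outright, so no quadratic drift term ever enters. Rewriting in the $H_i$ and subtracting off the unweighted Minkowski relation $\int_M(H_{r+1}+H_{r+2}\rho)\,dM=0$ leaves
\[
\int_M(H_1H_{r+1}-H_{r+2})\,\rho\,dM=0,
\]
and the self-shrinker equation $\rho=-2H$ converts this to $\int_M(H_1H_{r+1}-H_{r+2})\,H\,dM=0$. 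With the orientation making the constant $H_{r+1}$ positive (your elliptic-point remark is exactly what is needed here), the G\aa rding inequalities give $H>0$ and $H_1H_{r+1}-H_{r+2}\ge 0$, so the integrand is nonnegative and must vanish identically, forcing umbilicity. The moral: apply $L_r$ to $\rho$ rather than to $\tfrac12|x|^2$, so that the ``bad'' first-order term is $\langle\nabla S_{r+1},x^\top\rangle$, which the constancy hypothesis annihilates for free.
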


As a natural extension  of self-shrinkers, we say that $M$ is a \textit{$\lambda$-hipersurface} 
if the following equation is satisfied 
\begin{equation}
    H = - \frac{\rho}{2} + \lambda,
\end{equation}
where $\lambda$ is a constant.
For example, the sphere $\mathbb{S}^n(r)$  with radius $r$ is a $\lambda$-hypersurface   in $\mathbb{R}^{n + 1}$ for  $ \lambda = n/r-r/2$.

This concept  was introduced by Cheng and Wei in \cite{cheng2018complete}
where they studied mean  curvature flow that preserves a weighted volume. 
The authors show, among other facts, that a compact $\lambda$-hypersurface  is isometric to a round sphere if $H-\lambda \ge0$ and $\lambda(f(H-\lambda)-|A|^2) \ge0$, 
where $|A|^2 = \sum_{i,j}h_{ij}^2$ is the square of the norm of the second fundamental form 
and $f = \sum_{i, j, k}h_{ij}h_{jk} h_{ki}$.

Applying the same approach as in the proof of Theorem \ref{t3}, 
we obtain a simple proof of the following theorem due to Ross  \cite{ross2015rigidity}.

\begin{theorem}
Let $ x: M^n \to \mathbb {R}^{n + 1}$ be a $\lambda$-hypersurface closed with $H \ge \lambda$. If $|A|^2 \le 1/2$, then $M$ is a round sphere.
\end{theorem}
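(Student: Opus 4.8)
The plan is to run the argument of Theorem~\ref{t3} with the drift (Gaussian) Laplacian $\mathcal{L}u=\Delta u-\tfrac12\langle x,\nabla u\rangle$, which is the weighted version of $L_0=\Delta$ and is self-adjoint with respect to $e^{-|x|^2/4}\,dM$; since $M$ is closed, $\int_M(\mathcal{L}u)\,e^{-|x|^2/4}\,dM=0$ for every smooth $u$. First I would record the pointwise consequences of the defining equation $H=-\rho/2+\lambda$: under the hypothesis $H\ge\lambda$ one has $\rho=-2(H-\lambda)\le0$, so the support function is non-positive, and the auxiliary function $g:=H-\lambda=-\rho/2$ is non-negative. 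A short computation using $\nabla\rho=-A(x^{\top})$ (where $x^{\top}$ is the tangential part of the position vector), the Codazzi equation and the relation $\nabla H=\tfrac12 A(x^{\top})$ obtained by differentiating the $\lambda$-equation, gives the clean identity $\mathcal{L}\rho=-H-\rho|A|^2$, equivalently $\mathcal{L}H=\tfrac12\big(g(1-2|A|^2)+\lambda\big)$. Integrating the latter against the Gaussian weight and using $g\ge0$ together with $1-2|A|^2\ge0$ (the hypothesis $|A|^2\le\tfrac12$) already forces $\lambda\le0$.

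The heart of the proof is a Simons--Bochner type identity for $\mathcal{L}|A|^2$ on a $\lambda$-hypersurface. I would compute $\tfrac12\mathcal{L}|A|^2=|\nabla A|^2+Q$, where the reaction term $Q$ assembles the curvature contributions and, crucially, a cubic term coupling $\lambda$ to $\mathrm{tr}(A^3)$; this is exactly the quantity $f=\sum_{i,j,k}h_{ij}h_{jk}h_{ki}$ appearing in the Cheng--Wei condition quoted above. Writing $A=\tfrac{H}{n}I+\Phi$ with $\Phi$ the traceless part (so $|\Phi|^2=|A|^2-H^2/n\ge0$, with equality exactly at umbilic points), I would reorganize $Q$ so that its leading factor is controlled by $(1-2|A|^2)$ and by $\rho\le0$. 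The point of the hypotheses $|A|^2\le\tfrac12$ and $H\ge\lambda$ (hence $\lambda\le0$ and $\rho\le0$) is precisely to make every term of $Q$ non-negative, with $Q=0$ forcing $\Phi\equiv0$.

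Granting the sign $Q\ge0$, the argument closes at once: since $\tfrac12\mathcal{L}|A|^2=|\nabla A|^2+Q\ge0$ and $\int_M\mathcal{L}|A|^2\,e^{-|x|^2/4}\,dM=0$, both $|\nabla A|^2$ and $Q$ vanish identically. In particular $\Phi\equiv0$, so $M$ is totally umbilical; a connected closed totally umbilical hypersurface of $\real^{n+1}$ is a round sphere, which is the desired conclusion.

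I expect the main obstacle to be the sign analysis of the reaction term $Q$, and specifically of the cubic term $\lambda\,\mathrm{tr}(A^3)$: unlike the self-shrinker case $\lambda=0$ treated in Theorem~\ref{t3}, here this term is not sign-definite by itself. The role of the pinching $|A|^2\le\tfrac12$ is to dominate it, and verifying this---presumably via the elementary estimate $|\mathrm{tr}(A^3)|\le|A|\,|A|^2$ or a sharper symmetric-function inequality, together with $\rho\le0$---is the one place where a genuine computation rather than a formal manipulation is required. A secondary point to handle with care is the bookkeeping of signs and conventions (orientation of $N$, normalized versus non-normalized $H$) so that the threshold comes out to be exactly $|A|^2=\tfrac12$, matching the extremal sphere $\mathbb{S}^n(\sqrt{2n})$.
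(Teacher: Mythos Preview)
Your first step is fine and is in fact a nice observation: integrating $\mathcal{L}H=\tfrac12\big(g(1-2|A|^2)+\lambda\big)$ against the Gaussian weight does force $\lambda\le 0$. The gap is exactly where you locate it, and it is a real one: for a $\lambda$-hypersurface the Simons-type formula reads $\tfrac12\mathcal{L}|A|^2=|\nabla A|^2+|A|^2\!\left(\tfrac12-|A|^2\right)+\lambda\,\mathrm{tr}(A^3)$, and the reaction term $Q=|A|^2(\tfrac12-|A|^2)+\lambda\,\mathrm{tr}(A^3)$ is \emph{not} pointwise non-negative under the hypotheses. For instance, at a point with principal curvatures $(1/\sqrt{2},0,\dots,0)$ one has $|A|^2=\tfrac12$ and $\mathrm{tr}(A^3)=\tfrac{1}{2\sqrt{2}}>0$, so $Q=\lambda/(2\sqrt2)<0$ whenever $\lambda<0$. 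Since nothing in the assumptions bounds $|\lambda|$, the estimate $|\mathrm{tr}(A^3)|\le|A|^3$ cannot make the pinching $|A|^2\le\tfrac12$ absorb the cubic term; the speculation that it does is the step that fails. Thus the plan ``$Q\ge0\Rightarrow |\nabla A|\equiv0$'' does not go through as stated; one would need an additional integral identity linking $\lambda\!\int\!\mathrm{tr}(A^3)\,e^{-|x|^2/4}$ to already controlled quantities, and you have not supplied one.

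For comparison, the paper never touches $\mathcal{L}|A|^2$. It works only with the first-order identity for $\rho$ (equivalently your $\mathcal{L}\rho=-H-|A|^2\rho$) and splits on the sign of $\lambda$. When $\lambda\le 0$ it rewrites the identity as $\mathcal{L}\rho-\tfrac12\rho=-|A|^2\rho-\lambda\ge 0$ (using $\rho\le0$) and invokes the strong maximum principle to get $\rho$, hence $H$, constant; umbilicity then follows from the Minkowski formula. When $\lambda\ge 0$ it integrates $\Delta\rho$ \emph{without} the Gaussian weight to obtain $\int_M\big[(n-1)H+2(H-\lambda)(|A|^2-H^2)\big]\,dM=0$, and then the chain $|A|^2-H^2\ge-(n-1)|A|^2$ together with $2|A|^2\le 1$ and $H\ge\lambda$ gives the integrand $\ge (n-1)\lambda\ge 0$; equality everywhere forces $H^2=n|A|^2$, i.e.\ umbilicity. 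Note that your first step actually shows the case $\lambda>0$ in the paper's split is vacuous, so a corrected strategy for you would be to keep your weighted $\mathcal{L}H$ argument to get $\lambda\le 0$ and then finish with a maximum-principle or integral argument on $\rho$ (first order in $A$), rather than passing to the second-order Simons identity where the uncontrolled $\lambda\,\mathrm{tr}(A^3)$ term appears.
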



\section{Preliminaries}

In order to obtain our rigidity results, we  need the following propositions, which besides being important in themselves, have several other applications in  problems involving higher order mean curvatures
of hypersurfaces. We emphasize that such propositions are valid in space forms.

\begin{proposition}
Let $ x: M^n \to \mathbb{R}^{n + 1}$ be an orientable hypersurface of the
Euclidean space. If $ \rho: M\to  \mathbb{R}$ is the support function of $x$, then
\begin{equation}\label{lr}
L_r(\rho) = -(r + 1)S_{r + 1} -(S_1S_{r + 1} - (r + 2)S_{r + 2})\rho - \langle \nabla S_{r + 1}, x^T \rangle,
\end{equation}
where $x^\top$ indicates the component of $x$ tangent to $M$.
\end{proposition}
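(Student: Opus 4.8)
The plan is to compute $L_r(\rho)$ from the divergence form $L_r(\rho)=\mathrm{div}(P_r\nabla\rho)$, reducing everything to the algebra of the Newton transformations. The starting point is the orthogonal decomposition of the position vector, $x=x^\top+\rho N$, which expresses the normal component precisely through the support function. First I would differentiate $\rho=\langle x,N\rangle$ along a tangent field $X$; using that the ambient derivative of the position vector is the identity together with the Weingarten formula $\bar\nabla_X N=-AX$, one finds $X(\rho)=-\langle AX,x^\top\rangle$, hence $\nabla\rho=-A\,x^\top$. Next I would differentiate the tangential part itself: writing $x^\top=x-\rho N$ and taking tangential components via the Gauss formula gives $\nabla_X x^\top=X+\rho\,AX$, i.e. $\nabla x^\top=I+\rho A$ as a $(1,1)$-tensor.

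With these two identities the computation becomes purely tensorial. I would expand
$$L_r(\rho)=\mathrm{div}(P_r\nabla\rho)=-\mathrm{div}(P_r A\,x^\top)=-\langle\mathrm{div}(P_r A),x^\top\rangle-\mathrm{tr}(P_r A\,\nabla x^\top).$$
For the second term, $\nabla x^\top=I+\rho A$ yields $\mathrm{tr}(P_r A)+\rho\,\mathrm{tr}(P_r A^2)$, which I would evaluate through the classical trace identities $\mathrm{tr}(P_r A)=(r+1)S_{r+1}$ and $\mathrm{tr}(P_r A^2)=S_1S_{r+1}-(r+2)S_{r+2}$; these reproduce exactly the first two terms of the claimed formula. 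For the first term I would use the recursion $P_{r+1}=S_{r+1}I-A P_r$, so that $P_r A=A P_r=S_{r+1}I-P_{r+1}$, and then invoke the fact that the Newton transformations are divergence-free in a space form, $\mathrm{div}\,P_{r+1}=0$. This collapses $\mathrm{div}(P_r A)$ to $\nabla S_{r+1}$, producing the last term $-\langle\nabla S_{r+1},x^\top\rangle$.

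The step I expect to be the crux is the divergence-free property $\mathrm{div}\,P_{r+1}=0$ (equivalently, in the Hessian formulation $L_r(\rho)=\mathrm{tr}(P_r\nabla^2\rho)$, the use of the Codazzi equation $(\nabla_X A)Y=(\nabla_Y A)X$ together with the identity $\mathrm{tr}(P_r\nabla_X A)=\langle\nabla S_{r+1},X\rangle$, which rests on $P_r=\partial S_{r+1}/\partial A$). This is exactly where the constant curvature of the ambient space enters, and it is what lets the derivative-of-$A$ contribution be packaged cleanly as a gradient of $S_{r+1}$ rather than an unwieldy third-order term. Assembling the three contributions gives $L_r(\rho)=-(r+1)S_{r+1}-(S_1S_{r+1}-(r+2)S_{r+2})\rho-\langle\nabla S_{r+1},x^\top\rangle$, as claimed. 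I would also double-check the sign conventions for $A$ and $N$ at the differentiation stage, since those are the easiest place to introduce an error that would flip the signs in the final identity.
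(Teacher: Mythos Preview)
Your argument is correct: the computation of $\nabla\rho=-A\,x^\top$ and $\nabla x^\top=I+\rho A$, the divergence expansion $\mathrm{div}(P_rA\,x^\top)=\langle\mathrm{div}(P_rA),x^\top\rangle+\mathrm{tr}(P_rA\,\nabla x^\top)$, the trace identities for $P_rA$ and $P_rA^2$, and the reduction $\mathrm{div}(P_rA)=\nabla S_{r+1}$ via $P_rA=S_{r+1}I-P_{r+1}$ and $\mathrm{div}\,P_{r+1}=0$ all check out and assemble to the stated formula. The paper does not actually supply a proof here---it only cites Alencar and Colares---so your write-up in fact provides more than the paper itself; the approach you give is the standard one used in that reference and there is nothing substantive to contrast.
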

\begin{proof}
See Alencar e Colares \cite{alencar1998integral}, page 209.
\end{proof}

\begin{corollary}
Let $ x: M^n \to \mathbb{R}^{n + 1}$ be an orientable hypersurface of the
Euclidean space. If $ \rho: M \to \mathbb{R}$ is the support function of $x$, then
\begin{equation}\label{lapla}
    \Delta \rho = -nH_1-|A|^2\rho-n \langle \nabla H_1, x^T \rangle.
\end{equation}
\end{corollary}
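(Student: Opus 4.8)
The plan is to specialize the Proposition to the case $r=0$ and then translate the resulting expression, written in terms of the elementary symmetric functions $S_k$, into the normalized quantities $H_1$ and $|A|^2$. All the geometric content is already packaged in \eqref{lr}, so the work is purely a matter of bookkeeping.

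First I would observe that $P_0=I$ by definition, so that
$$L_0(u)=\textrm{tr}(I\cdot\nabla^2 u)=\textrm{tr}(\nabla^2 u)=\Delta u,$$
and in particular $L_0(\rho)=\Delta\rho$. Setting $r=0$ in \eqref{lr}, the coefficients become $S_{r+1}=S_1$, $(r+1)=1$, and $S_1S_{r+1}-(r+2)S_{r+2}=S_1^2-2S_2$, which yields
$$\Delta\rho=-S_1-(S_1^2-2S_2)\rho-\langle\nabla S_1,x^T\rangle.$$

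The key algebraic step is then to rewrite these three coefficients in the normalized form. Since $H_1=S_1/n$ we have $S_1=nH_1$, which immediately handles the first term and the gradient term. For the middle term I would invoke the standard Newton identity relating the first two elementary symmetric functions of the principal curvatures $\kappa_1,\dots,\kappa_n$ to the squared norm of the second fundamental form: expanding $S_1^2=(\sum_i\kappa_i)^2=\sum_i\kappa_i^2+2\sum_{i<j}\kappa_i\kappa_j$ gives $S_1^2-2S_2=\sum_i\kappa_i^2=|A|^2$.

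Substituting these identities produces exactly
$$\Delta\rho=-nH_1-|A|^2\rho-n\langle\nabla H_1,x^T\rangle,$$
which is \eqref{lapla}. There is no genuine obstacle in this argument, as the entire statement is contained in the Proposition; the only care required is the correct conversion $S_k\mapsto H_r$ together with the elementary identity $S_1^2-2S_2=|A|^2$.
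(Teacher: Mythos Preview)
Your proof is correct and follows exactly the same approach as the paper: specialize \eqref{lr} to $r=0$, so that $L_0=\Delta$, and then use the identity $S_1^2-2S_2=|A|^2$ together with $S_1=nH_1$ to rewrite the result. You simply spell out in detail what the paper states in one line.
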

\begin{proof}
Take $r = 0$ in equation (\ref{lr}) and use the identity $|A|^2 + 2S_2 = S_1^2.$

\end{proof}

The so-called Garding and Newton inequalities are used to prove the following
result:

\begin{proposition}\label{Garding}
Let $ x: M^n \to \mathbb{R}^{n + 1}$ be a closed orientable hypersurface. If $ H_{r + 1}$  is positive on $ M $, then for every $ i $, with $ 1 \le i \le r $, we have:
\begin{enumerate}
    \item[(a)] Each $ H_i $ is positive.
    \item[(b)] $ H_1H_{i + 1} -H_{i + 2} \ge0 $.
\end{enumerate}
Moreover, equality in $ (b) $ occurs for some $ i $ if, and only if
$M$ is a round sphere.
\end{proposition}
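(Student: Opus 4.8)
The plan is to combine the two algebraic facts named in the hint—\emph{Gårding's inequalities}, which control the sign of the lower-order curvatures inside the Gårding cone, and \emph{Newton's inequalities} $H_k^2\ge H_{k-1}H_{k+1}$—with a single geometric input coming from compactness, namely the existence of an elliptic point. Throughout I write $\lambda_1,\dots,\lambda_n$ for the principal curvatures and let $\Gamma_{r+1}\subset\mathbb{R}^n$ denote the Gårding cone, that is, the connected component of $\{S_{r+1}>0\}$ containing the diagonal $(1,\dots,1)$; recall that on $\Gamma_{r+1}$ one has $S_1,\dots,S_{r+1}>0$, and that the cones are nested, $\Gamma_1\supset\cdots\supset\Gamma_n$.

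First I would produce an elliptic point. Since $M$ is closed, the function $\frac{1}{2}|x|^2$ attains a maximum at some $p_0$; a standard Hessian computation at $p_0$ shows that, after fixing the orientation suitably, every principal curvature at $p_0$ is positive. Hence the curvature vector $\lambda(p_0)$ lies in the positive cone, and in particular in $\Gamma_{r+1}$. The next—and in my view the crucial—step is to upgrade this single point to all of $M$. By hypothesis $H_{r+1}>0$, so the continuous map sending $p$ to its principal curvatures carries all of $M$ into $\{S_{r+1}>0\}$; since $M$ is connected its image is a connected subset of $\{S_{r+1}>0\}$ that meets $\Gamma_{r+1}$, and a connected set inside $\{S_{r+1}>0\}$ must lie in a single connected component. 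Therefore $\lambda(p)\in\Gamma_{r+1}$ for every $p\in M$. Part (a) is then immediate from Gårding's inequalities: on $\Gamma_{r+1}$ all of $H_1,\dots,H_{r+1}$ are positive, so in particular each $H_i>0$ for $1\le i\le r$.

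With positivity in hand, part (b) is a short manipulation of Newton's inequalities. Since every $H_k$ involved is positive, the inequalities $H_k^2\ge H_{k-1}H_{k+1}$ rearrange into a monotone chain of ratios
\[
H_1=\frac{H_1}{H_0}\ge\frac{H_2}{H_1}\ge\cdots\ge\frac{H_{i+2}}{H_{i+1}},
\]
whose outer terms give exactly $H_{i+2}\le H_1H_{i+1}$, i.e.\ $H_1H_{i+1}-H_{i+2}\ge0$.

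For the rigidity clause I would argue as follows. If $M$ is a round sphere then all $\lambda_j$ coincide, every Newton inequality is an equality, and one checks directly that $H_1H_{i+1}=H_{i+2}$; this is the easy direction. Conversely, suppose the identity $H_1H_{i+1}\equiv H_{i+2}$ holds on $M$ for some $i$. Then the outer terms of the ratio chain agree, which forces every intermediate inequality to be an equality; in particular $H_1^2=H_2$ at each point. Since $H_1^2-H_2=\frac{1}{n^2(n-1)}\sum_{j<k}(\lambda_j-\lambda_k)^2$, this means every point of $M$ is umbilical, so $M$ is totally umbilical and hence, being closed and connected, a round sphere. The main obstacle in the whole argument is the second step: passing from one elliptic point to the global conclusion that $\lambda(M)\subset\Gamma_{r+1}$, which is precisely where compactness and connectedness of $M$ are indispensable—$H_{r+1}>0$ alone only places $\lambda(M)$ in $\{S_{r+1}>0\}$, a set that also contains components on which the lower $H_i$ may change sign.
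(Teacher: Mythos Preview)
Your argument is correct and is precisely the route the paper has in mind: the paper gives no proof of its own but simply cites Silva et al.\ after remarking that ``the so-called Garding and Newton inequalities are used,'' and that is exactly what you carry out---compactness furnishes an elliptic point, connectedness forces $\lambda(M)\subset\Gamma_{r+1}$ (giving (a)), and the Newton chain $H_1\ge H_2/H_1\ge\cdots\ge H_{i+2}/H_{i+1}$ yields (b) together with the umbilicity in the equality case. In effect your write-up supplies the details the paper only points to.
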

\begin{proof}
See Silva et al. \cite{da2016stability}, page 297.
\end{proof}

In the next result we present the classical Minkowski integral formula. 
For the sake of completeness, we present a concise demonstration following ideas of Alias and Malacarne in \cite{alias2004first}.

\begin{proposition}\label{Minkowski}
Let $ x: M^n \to \mathbb{R}^{n + 1} $ be a closed hypersurface. 
Then for every $ r $, with $ 0 \le r \le n-1 $, we have $$ \int_M (H_r + H_{r + 1} \rho) dM = 0.$$
\end{proposition}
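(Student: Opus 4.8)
The plan is to integrate the operator $L_r$ applied to the squared-distance function $f=\tfrac12\langle x,x\rangle$ and to exploit that $L_r$ is in divergence form. Since $M$ is closed and, in the Euclidean space, $L_r u=\mathrm{div}(P_r(\nabla u))$ (the Newton tensor $P_r$ being divergence-free in space forms), the divergence theorem gives $\int_M L_r f\,dM=0$ with no boundary contribution. The entire task is therefore to identify the integrand $L_r f$ as, up to a positive constant, exactly $H_r+H_{r+1}\rho$.

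First I would compute the gradient and Hessian of $f$. From $\bar\nabla_X x=X$ one gets $\nabla f=x^\top$, where $x^\top=x-\rho N$ is the tangential part of the position vector. Differentiating $x^\top$ and using the Weingarten relation $\bar\nabla_X N=-AX$, the tangential projection yields $\nabla_X x^\top=X+\rho AX$; that is, the Hessian operator of $f$ is $\nabla^2 f=I+\rho A$. As a sanity check, on a round sphere $x^\top\equiv 0$ and this expression indeed vanishes.

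Next I would substitute this into $L_r f=\mathrm{tr}(P_r\nabla^2 f)=\mathrm{tr}(P_r)+\rho\,\mathrm{tr}(AP_r)$ and invoke the standard Newton-tensor trace identities $\mathrm{tr}(P_r)=(n-r)S_r$ and $\mathrm{tr}(AP_r)=(r+1)S_{r+1}$. Writing $S_r=\binom{n}{r}H_r$ and using the binomial identity $(n-r)\binom{n}{r}=(r+1)\binom{n}{r+1}=:c_r>0$, these combine into $L_r f=c_r\,(H_r+H_{r+1}\rho)$. Integrating over $M$, the left-hand side vanishes by the divergence theorem, and dividing by the positive constant $c_r$ gives the claimed formula.

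The computation is essentially routine once these two ingredients are in place, so the only points demanding care are (i) the correct sign in the Hessian $\nabla^2 f=I+\rho A$, which hinges on the sign convention relating $A$ and $N$ and must be the one consistent with the round sphere, and (ii) the identity $\mathrm{div}(P_r(\nabla f))=\mathrm{tr}(P_r\nabla^2 f)$, which is precisely where the divergence-freeness of $P_r$ in the Euclidean space enters. Neither is a genuine obstacle, and the statement follows as a direct consequence of the divergence theorem applied to $f=\tfrac12\langle x,x\rangle$.
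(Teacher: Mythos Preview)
Your proposal is correct and follows essentially the same route as the paper: both compute $L_r$ of the half squared-distance function $f=\tfrac12|x|^2$, obtain $\nabla^2 f=I+\rho A$, use the trace identities $\mathrm{tr}(P_r)=(n-r)S_r$ and $\mathrm{tr}(AP_r)=(r+1)S_{r+1}$ together with $(n-r)\binom{n}{r}=(r+1)\binom{n}{r+1}$, and then apply the divergence theorem. Your write-up is in fact slightly more explicit than the paper's about the binomial identity and the role of the divergence-freeness of $P_r$, but the argument is the same.
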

\begin{proof}
Consider the function $ g: M^n \to \mathbb{R} $ defined by $ g = (1/2)|x|^2. $ We know that $ \nabla g = x^T $, where $ x^T = x- \rho N $. Then, for each tangent vector field $ X $ on $ M $ we have 
$$ (\nabla^2g)(X) = \nabla_X (\nabla g) = X + \rho A (X). $$
Therefore, $$ L_r (g) = tr (P_r \nabla^2g) = tr (P_r) + tr (AP_r) \rho = c_rH_r + c_rH_{r + 1} \rho = c_r (H_r + H_{r + 1 } \rho), $$
with $ c_r = (n-r) \binom {n}{r} $ and the traces are determined in \cite{rosenberg1993}, page 13. By the divergence theorem, it follows that $$ \int_M (H_r + H_{r + 1 } \rho) dM = 0, $$
finalizing the proof.
\end{proof}

To conclude this section we present two identities that will be useful
for our purposes. First, a directly computation yields 
\begin{equation}\label{laplaciano}
    \Delta |x|^2 = 2n (1 + H_1 \rho).
\end{equation}

The next identity is a consequence of the divergence theorem.
\begin{equation}\label{divergencia}
    \int_MuL_r (v) dM = - \int_M \langle P_r (\nabla u), \nabla v \rangle dM,
\end{equation}
whenever  $ u $ and $ v $ are smooth functions on  $ M $

\section{Proof of Theorems}

In this section we present the proofs of our  theorems. 
For the reader's convenience, we state the theorems again.

\begin{theorem}
Let $x:M^n \to \mathbb {R}^{n + 1}$ be a closed hypersurface with non-negative support function. 
and such that operator $P_r$ is positively definite. 
Then the mean curvature $H_1$ satisfies the equation $ L_ru = 1 + H_1 \rho$ 
if and only if $M$ is isometric to a round sphere.
\end{theorem}
\begin{proof}
If $ H_1 $ is a solution to the equation we have $ H_1L_rH_1 = H_1 + H_1^2 \rho$. So, applying identity (\ref{divergencia}) we get
$$ 
- n \int_M \langle P_r (\nabla H_1), \nabla H_1 \rangle dM = n \int_MH_1dM + n \int_MH_1^2 \rho dM. 
$$
On the other hand, using formula (\ref{lapla}) 
$$ n \int_MH_1dM = - \int_M |A|^2 \rho dM + \frac {n}{2} \int_M H_1 \Delta |x|^2dM. $$
From (\ref{laplaciano}) and the hypothesis about $ H_1, $ we rewrite this last equality as 
$$ 
n \int_MH_1dM = - \int_M |A|^2 \rho dM-n^2 \int_M \langle P_r (\nabla H_1), \nabla H_1 \rangle dM. 
$$
Therefore, 
$$
(n^2-n) \int_M \langle P_r (\nabla H_1), \nabla H_1 \rangle dM + \int_M (|A|^2-nH_1^2) \rho dM = 0. 
$$
Since $ P_r $ is  positively definite and $ \rho \ge 0 $, it follows that $ H_1 $ and $ \rho $ are constant. Furthermore, we conclude that $ M $ is totally umbilical, and therefore a round sphere.
\end{proof}

We now recall the following algebraic inequality related to $ r $-th mean curvature. For each  $ 1 \le r \le n- 1 $ it holds 
$$ 
H_r^2 \ge H_{r-1}H_{r + 1},
$$  and equality occurs only at umbilical points of $ M $. See, for example, Steele \cite{steele2004cauchy}, page 178.

\begin{theorem}
Let $x: M^n \to \mathbb {R}^{n + 1}$ be a closed hypersurface with  non-negative support function.
Assume that,  for some $1 \le r \le{n-2}$, the operator $P_r$ is positively defined and  $H_r$ is constant.
Then the mean curvature $H_{r + 1}$ satisfies the equation $L_ru = H_r + H_{r + 1} \rho$ 
if and only if $M$ is isometric to a round sphere.
\end{theorem}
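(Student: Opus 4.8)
The plan is to mirror the proof of the previous theorem, using $H_{r+1}$ itself as the test function, and to supply the non-negativity of the resulting weighted term by means of the Newton-type inequality recalled just above. First I would assume $u=H_{r+1}$ solves $L_ru=H_r+H_{r+1}\rho$. Multiplying this equation by $H_{r+1}$, integrating over $M$, and invoking the divergence identity (\ref{divergencia}) (recall that $H_r$ is constant, so it factors out), I obtain
\[
-\int_M\langle P_r(\nabla H_{r+1}),\nabla H_{r+1}\rangle\,dM=H_r\int_M H_{r+1}\,dM+\int_M H_{r+1}^2\rho\,dM.
\]
The left-hand side is non-positive because $P_r$ is positively definite, so the whole argument reduces to showing that the right-hand side is non-negative.

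I expect this to be the crux, and here I would \emph{not} estimate the two terms separately, since $\int_M H_{r+1}\,dM$ carries no obvious sign (indeed the Minkowski formula gives $\int_M H_{r+1}\rho\,dM=-H_rV<0$, so $H_{r+1}$ changes sign in general). Instead I combine them. Because $1\le r\le n-2$, the algebraic inequality applies with index $r+1$, giving $H_{r+1}^2\ge H_rH_{r+2}$ pointwise. Multiplying by $\rho\ge0$, integrating, and then using the Minkowski formula of Proposition \ref{Minkowski} at level $r+1$, namely $\int_M(H_{r+1}+H_{r+2}\rho)\,dM=0$, I get
\[
\int_M H_{r+1}^2\rho\,dM\ge H_r\int_M H_{r+2}\rho\,dM=-H_r\int_M H_{r+1}\,dM,
\]
so that $H_r\int_M H_{r+1}\,dM+\int_M H_{r+1}^2\rho\,dM\ge0$. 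Comparing with the identity above, both sides vanish; from the left-hand side and the positivity of $P_r$ I conclude $\nabla H_{r+1}\equiv0$, i.e.\ $H_{r+1}$ is constant. The hypothesis $r\le n-2$ is used precisely to make this inequality available, and it is the analogue of the role played by $|A|^2\ge nH_1^2$ in the previous theorem.

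It remains to pass from $H_{r+1}$ constant to the sphere. Since $H_{r+1}$ is constant, $L_rH_{r+1}=0$, and the equation forces $H_r+H_{r+1}\rho=0$. As $P_r$ is positively definite, $\mathrm{tr}\,P_r=(n-r)\binom{n}{r}H_r>0$, hence $H_r\ne0$; therefore $H_{r+1}\ne0$ and $\rho=-H_r/H_{r+1}$ is a nonzero constant. Recalling from the proof of Proposition \ref{Minkowski} that $L_r(\tfrac12|x|^2)=(n-r)\binom{n}{r}(H_r+H_{r+1}\rho)=0$, the function $\tfrac12|x|^2$ is $L_r$-harmonic on the closed manifold $M$ with $P_r$ elliptic, hence constant by (\ref{divergencia}). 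Thus $|x|^2$ is constant, so $M$ lies on a sphere centered at the origin and, being a closed hypersurface, coincides with it. The converse is a direct computation: on such a round sphere every $H_i$ and $\rho$ are constant and satisfy $H_r+H_{r+1}\rho=0=L_rH_{r+1}$, so $u=H_{r+1}$ solves the equation.
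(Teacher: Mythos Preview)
Your proof is correct and the core strategy---multiply the equation by $H_{r+1}$, integrate, apply the Newton inequality $H_{r+1}^2\ge H_rH_{r+2}$ together with $\rho\ge0$, and close with the Minkowski formula at level $r+1$---is exactly the paper's.

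The only difference lies in the last step. Once $H_{r+1}$ is constant, the paper observes that the chain of inequalities collapses, so $H_{r+1}^2=H_rH_{r+2}$ (pointwise, since one also gets $\rho$ constant and nonzero from the equation), and invokes the equality case of the Newton inequality to conclude total umbilicity. You instead deduce $H_r+H_{r+1}\rho=0$, feed this into $L_r(\tfrac12|x|^2)=c_r(H_r+H_{r+1}\rho)$, and use ellipticity of $L_r$ to force $|x|^2$ constant. Both arguments are short; yours has the mild advantage of not relying on the equality characterization in the Newton inequality (and of making explicit that $\rho$ is a nonzero constant, which the paper leaves implicit). You also supply the converse direction, which the paper omits.
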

\begin{proof}
As before, if $ H_{r + 1} $ is a solution to that equation, we get  
$$
- \int_M \langle P_r (\nabla H_{r + 1}), \nabla H_{r + 1} \rangle dM = \int_MH_rH_{r + 1} dM + \int_MH_{r + 1}^2 \rho dM. 
$$
Since $ H_{r + 1}^2 \rho \ge H_rH_{r + 2} \rho$, and using our 
hypotheses on  $ P_r $, $ H_r $ and the Minkowski formula we obtain
$$ 
0 \ge- \int_M \langle P_r (\nabla H_{r + 1}), \nabla H_{r + 1} \rangle dM \ge H_r\int_M (H_{r + 1} + H_{r + 2} \rho) dM = 0, 
$$
It follows that $ H_{r + 1} $ is constant and so all 
inequalities  above are equalities. It means that 
$H_r^2 = H_{r-1}H_{r + 1}$ on $M$, and 
we conclude that $ M $ is a round sphere.
\end{proof}

Now we prove our theorems on self-shrinkers.

\begin{theorem}
Let $x: M^n \to \mathbb{R}^{n + 1}$ be a closed self-shrinker with $H_{r + 1}$ 
constant for some $1\le r\le n-1$. Then $M = \mathbb{S}^n(\sqrt{2n})$.
\end{theorem}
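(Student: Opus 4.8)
The plan is to reproduce the integral scheme of the previous two theorems, using the defining equation of a self-shrinker to produce the second-order identity that in Theorem \ref{t2} was supplied as a hypothesis. Writing the non-normalized mean curvature as $H=S_1$, the self-shrinker equation becomes $S_1=-\rho/2$, so that $\rho=-2S_1=-2nH_1$ and $\nabla\rho=-2\nabla S_1$; combined with the standard formula $\nabla\rho=-A(x^\top)$ this already exhibits $\rho$ as a curvature quantity. Since $H_{r+1}$ is constant, so is $S_{r+1}$, hence $\nabla S_{r+1}=0$ and the last term of (\ref{lr}) drops, leaving the clean equation
\[
L_r(\rho)=-(r+1)S_{r+1}-\bigl(S_1S_{r+1}-(r+2)S_{r+2}\bigr)\rho .
\]

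First I would test this identity against a function with nonvanishing gradient, the natural choice being $\rho$ itself: pairing against the constant $H_{r+1}$ (as in Theorem \ref{t2}) would collapse to the trivial $0=0$, precisely because $H_{r+1}$ is constant. Integrating by parts as in (\ref{divergencia}) produces the definite term $-\int_M\langle P_r(\nabla\rho),\nabla\rho\rangle$, while the remaining terms are rewritten through $S_1=-\rho/2$ and the recursion $P_rA=S_{r+1}I-P_{r+1}$. Since the support function is no longer independent of the geometry, the genuinely self-adjoint operator to integrate against is the drift form $\mathcal{L}_ru=L_ru-\tfrac12\langle x,P_r\nabla u\rangle=e^{|x|^2/4}\,\mathrm{div}\!\left(e^{-|x|^2/4}P_r\nabla u\right)$, which is divergence-free because $P_r$ is, and for which $\int_M\mathcal{L}_ru\,e^{-|x|^2/4}\,dM=0$ on the closed $M$; this replaces the plain divergence theorem and absorbs the first-order term coming from $\nabla\rho=-A(x^\top)$.

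With the identity in integrated form, I would feed in the Newton inequalities of Proposition \ref{Garding}, the algebraic inequality $H_r^2\ge H_{r-1}H_{r+1}$, and the Minkowski formula of Proposition \ref{Minkowski}, exactly as in the proof of Theorem \ref{t2}, so as to squeeze the signed integral between two equal quantities. Forcing every intermediate inequality to be an equality yields $H_r^2\equiv H_{r-1}H_{r+1}$ on $M$, i.e.\ $M$ is totally umbilical and therefore a round sphere; the self-shrinker equation $S_1=-\rho/2$ then forces $|A|^2=1/2$, which among umbilical spheres singles out $\mathbb{S}^n(\sqrt{2n})$.

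The hard part will be the middle step. Unlike in Theorem \ref{t2}, where the hypothesis handed over the equation $L_ru=H_r+H_{r+1}\rho$ and the condition $\rho\ge0$ finished the argument, here I must both fix the sign of $H_{r+1}$ — which I expect to do by evaluating at the elliptic point where $|x|^2$ is maximal, so that all $S_i$ are positive there and, $H_{r+1}$ being constant, $H_{r+1}>0$ globally, making Proposition \ref{Garding} applicable — and show that after the weighted integration by parts the non-gradient terms reassemble into a single sign-definite integrand. The delicate point is the control of the quadratic form $\langle x^\top,P_rA\,x^\top\rangle=S_{r+1}|x^\top|^2-\langle x^\top,P_{r+1}x^\top\rangle$ arising from the drift term; it is exactly here that the self-shrinker relation $\rho=-2S_1$ must be used to cancel the contributions that have no analogue in the compact-support-function setting of Theorem \ref{t2}.
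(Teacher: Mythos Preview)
You dismissed the simplest move too quickly. You write that ``pairing against the constant $H_{r+1}$ \dots\ would collapse to the trivial $0=0$'', but integrating $L_r(\rho)$ against the constant $1$ is \emph{not} trivial: the left side $\int_M L_r(\rho)\,dM$ vanishes by the divergence theorem, while the right side of (\ref{lr}) does not vanish termwise, since $S_1$ and $S_{r+2}$ are not assumed constant. That mismatch is exactly the content of the proof.

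The paper does precisely this. Integrating (\ref{lr}) with $\nabla S_{r+1}=0$ and rewriting the binomial coefficients gives
\[
-(r+1)\int_M\bigl(H_{r+1}+H_{r+2}\rho\bigr)\,dM \;-\; n\int_M\bigl(H_1H_{r+1}-H_{r+2}\bigr)\rho\,dM \;=\;0.
\]
The first integral is zero by the Minkowski formula (Proposition~\ref{Minkowski}). In the second, substitute the self-shrinker relation $\rho=-2H$ to obtain $\int_M(H_1H_{r+1}-H_{r+2})H\,dM=0$. After choosing the orientation so that the constant $H_{r+1}$ is positive (possible by the elliptic-point argument you already noted), Proposition~\ref{Garding} gives $H_1>0$ and $H_1H_{r+1}-H_{r+2}\ge 0$, so the integrand is nonnegative and must vanish identically. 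The equality case of Proposition~\ref{Garding}(b) then forces $M$ to be totally umbilical.

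None of the machinery you set up --- pairing with $\rho$, the drift operator $\mathcal{L}_r$, positivity of $P_r$, the quadratic form $\langle x^\top,P_rA\,x^\top\rangle$ --- is needed, and the ``hard middle step'' you flagged does not arise. Your plan is not wrong in spirit, but it routes around a two-line argument; before introducing weighted divergences, check what plain integration of (\ref{lr}) already buys you.
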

\begin{proof}
Since $ H_{r + 1} $ is constant, we obtain by integrating the identity (\ref{lr})
\begin{eqnarray*}
 0 &=& - (r + 1) \binom{n}{r + 1} \int_M H_{r + 1} dM \\ 
 &&- \int_M \left[n \binom{n}{r + 1} H_1H_{r + 1} - (r + 2) \binom {n}{r + 2} H_{r + 2} \right] \rho \, dM \\ 
 &=& - (r + 1) \binom {n}{r + 1} \int_MH_{r+1} dM \\ 
 &&- \int_M \left[n \binom {n}{r + 1} H_1H_ {r + 1} - (n-(r + 1)) \binom {n}{r + 1} H_ {r + 2} \right] \rho \, dM
\end{eqnarray*}

Organizing the terms, $$ - (r + 1) \int_M (H_ {r + 1} + H_ {r + 2} \rho) dM-n \int_M (H_1H_ {r + 1} -H_ {r + 2} ) \rho \, dM = 0. $$

Therefore, from Proposition \ref{Minkowski} and by the equation of a self-shrinker we have, $$ \int_M (H_1H_ {r + 1} -H_ {r + 2})H dM = 0. $$

Choosing the orientation such that $ H_ {r + 1}> 0 $, we conclude by 
Proposition \ref{Garding} that $ M $ is totally umbilical. Therefore, $ M = \mathbb {S}^n (\sqrt {2n}) $.
\end{proof}

Finally, we will show that

\begin{theorem}
Let $ x: M^n \to \mathbb {R}^{n + 1}$ be a closed $\lambda$-hypersurface  with $H \ge \lambda$. If $|A|^2 \le 1/2$, then $M$ is a round sphere.
\end{theorem}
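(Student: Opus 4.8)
The plan is to follow the template of Theorem \ref{t3}: manufacture an integral identity, feed the defining equation into it to eliminate $\rho$, and force umbilicity from a curvature inequality. First I would put the hypotheses in usable form. Since $H=-\rho/2+\lambda$ and $H\ge\lambda$, the support function obeys $\rho=2(\lambda-H)\le 0$; the goal is then to show that $M$ is totally umbilical, after which the classical fact that a closed totally umbilical hypersurface of $\mathbb R^{n+1}$ is a round sphere completes the proof.

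The engine is the support-function calculus of Section 2, specialized by the defining equation. Because $\nabla H=-\tfrac12\nabla\rho$ on a $\lambda$-hypersurface, the gradient term in the Corollary, equation (\ref{lapla}), is exactly $-\tfrac12\langle\nabla\rho,x^\top\rangle$. Attempting the literal template of Theorem \ref{t3}, i.e.\ integrating (\ref{lr}) with $r=0$, is frustrated by this term, which no longer vanishes since $S_1=H$ is not constant, and produces cubic curvature integrals under the divergence theorem. This is what motivates replacing the Laplacian by the weighted (drift) operator $\mathcal L u=\Delta u-\tfrac12\langle x,\nabla u\rangle$, which absorbs the offending term and collapses (\ref{lapla}) into the clean pointwise identity $\mathcal L\rho=-H-|A|^2\rho$. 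The operator $\mathcal L$ is self-adjoint for the Gaussian weight $e^{-|x|^2/4}dM$, so $\int_M \mathcal L u\,e^{-|x|^2/4}dM=0$ on the closed manifold $M$; this plays the role the divergence theorem plays in Theorem \ref{t3}. Integrating $\mathcal L\rho$ and using $\rho\le 0$ and $|A|^2\le\tfrac12$ already extracts the sign information $\lambda\le 0$.

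Differentiating the picture once more, I would combine Simons' identity for $\Delta|A|^2$ with $\nabla H=-\tfrac12\nabla\rho$ to obtain the weighted Bochner formula $\tfrac12\,\mathcal L|A|^2=|\nabla A|^2+|A|^2\big(\tfrac12-|A|^2\big)+\lambda f$, where $f=\sum_{i,j,k}h_{ij}h_{jk}h_{ki}$ is the cubic invariant already introduced in Section 1. Integrating against the Gaussian weight kills the left-hand side and leaves $\int_M |\nabla A|^2 e^{-|x|^2/4}dM+\int_M |A|^2\big(\tfrac12-|A|^2\big)e^{-|x|^2/4}dM+\lambda\int_M f\,e^{-|x|^2/4}dM=0$. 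The first integrand is manifestly nonnegative, and the pinching $|A|^2\le\tfrac12$ makes the second one nonnegative as well, so two of the three terms have the favorable sign.

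The main obstacle is the term $\lambda\int_M f\,e^{-|x|^2/4}dM$, whose sign is not fixed by the hypotheses: on the admissible spheres $\mathbb S^n(r)$ with $r\ge\sqrt{2n}$ one has $\lambda\le 0$ but $f>0$, so this term is negative and must be balanced against the positive middle term rather than discarded. This is the precise analogue of the cubic term $\int_M(H_1H_{r+1}-H_{r+2})H\,dM$ in Theorem \ref{t3}, tamed there by the G\aa rding inequality of Proposition \ref{Garding}. Here the plan is to control $f$ algebraically via the traceless part $B=A-\tfrac{H}{n}I$: one has $f=\textrm{tr}(B^3)+\tfrac{3H}{n}|B|^2+\tfrac{H^3}{n^2}$ together with the sharp estimate $|\textrm{tr}(B^3)|\le\tfrac{n-2}{\sqrt{n(n-1)}}|B|^3$, where $|B|^2=|A|^2-H^2/n$ is the Cauchy–Schwarz defect that vanishes exactly at umbilical points. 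Using $\lambda\le 0$, $\rho\le 0$ and $|A|^2\le\tfrac12$ to sign the coefficient of $|B|^2$, I expect the full integrand to be bounded below by a nonnegative multiple of $|B|^2$ modulo $|\nabla A|^2$, so that the vanishing of the sum forces $\nabla A\equiv 0$ and $B\equiv 0$; either conclusion makes $M$ totally umbilical, hence a round sphere. Checking that the constants line up so that this coefficient has the correct sign is the delicate step, and it is exactly where the numerical value $1/2$ in the hypothesis $|A|^2\le 1/2$ is consumed.
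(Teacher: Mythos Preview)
Your route through the drift Laplacian $\mathcal L$ and Gaussian-weighted integration is genuinely different from the paper's, and your first step is correct and useful: integrating $\mathcal L\rho=-H-|A|^2\rho$ against $e^{-|x|^2/4}\,dM$ and using $\rho\le 0$, $|A|^2\le\tfrac12$ does force $\lambda\le 0$. But the proposal has a real gap at the next stage. After the weighted Simons identity you are left with the cubic term $\lambda\int_M f\,e^{-|x|^2/4}\,dM$, and you only \emph{expect} the traceless decomposition $f=\mathrm{tr}(B^3)+\tfrac{3H}{n}|B|^2+\tfrac{H^3}{n^2}$ together with the Okumura-type bound $|\mathrm{tr}(B^3)|\le\tfrac{n-2}{\sqrt{n(n-1)}}|B|^3$ to yield a nonnegative lower bound; you do not carry this out, and it is not a routine check. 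On the admissible spheres the middle term $|A|^2(\tfrac12-|A|^2)$ is strictly positive while $\lambda f$ is strictly negative and the two cancel exactly, so they must be played off against one another with precision; meanwhile the hypotheses give no direct control on the sign of $\tfrac{3\lambda H}{n}|B|^2+\lambda\,\mathrm{tr}(B^3)$ (you have $\lambda\le 0$ and $H\ge\lambda$, but not $H\ge 0$ everywhere). The Cheng--Wei result quoted in the introduction needs the extra assumption $\lambda(f(H-\lambda)-|A|^2)\ge 0$ precisely to tame this cubic contribution, so it is far from clear your inequality closes without further input.

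The paper avoids Simons' formula altogether and is considerably more elementary. It splits on the sign of $\lambda$. For $\lambda\le 0$ it rewrites your own identity pointwise as $\mathcal L\rho-\tfrac12\rho=-\lambda-|A|^2\rho\ge -\lambda\ge 0$ and invokes the strong maximum principle on the closed $M$ to conclude that $\rho$ (hence $H$) is constant; umbilicity then follows from (\ref{lapla}) and the Minkowski formula. For $\lambda\ge 0$ it integrates (\ref{lapla}) against the \emph{unweighted} measure, uses (\ref{laplaciano}) to evaluate $\int_M H\,\Delta|x|^2\,dM$, and obtains $\int_M\big[(n-1)H+2(H-\lambda)(|A|^2-H^2)\big]\,dM=0$; the Cauchy--Schwarz bound $H^2\le n|A|^2$ together with $|A|^2\le\tfrac12$ makes the integrand $\ge(n-1)\lambda\ge 0$, so equality is forced throughout and $M$ is umbilical. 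In particular, since you already established $\lambda\le 0$, the maximum-principle branch would finish your argument in one line, with no need for Simons' identity or the cubic invariant $f$.
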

\begin{proof}
Fist, let us consider the case $\lambda \leq 0$. 
Since  $\rho = 2(\lambda - H)\leq 0$ we can use identity (\ref{lapla}) to obtain
\[
\Delta \rho -\frac 1 2  \langle \nabla \rho, x^\top\rangle - \frac{\rho}{2}
\geq -\lambda \geq 0.
\]
It follows from the strong maximum principle that $\rho$ is constant and thus
$H$ is also constant.  Now we can use  Minkowski formula and
identity (\ref{lapla}) to conclude that $n|A|^2 = H^2$, and so $M$
is totally umbilical.

\medskip

Now, let us assume that $\lambda \geq 0$. Since
$ H = nH_1 $ and $ \rho = 2(\lambda-H) $, we use  identity (\ref{lapla}) 
to get  
\begin{eqnarray*}
    0 &=& \int_MHdM + \int_M2 (\lambda-H) |A|^2dM + 
    \frac{1}{2} \int_M \langle \nabla H, \nabla |x|^2 \rangle dM \\ 
    &=& \int_MHdM + \int_M2(\lambda-H) |A|^2dM- \frac {1}{2} \int_MH \Delta |x|^2 dM \\ 
    &=& \int_MHdM + \int_M2(\lambda-H) |A|^2dM- \int_MH(n + 2 (\lambda-H)H)dM,
\end{eqnarray*}
where we have used formula (\ref{laplaciano} in the last equality.
Organizing the terms we obtain
\begin{equation}\label{last}
    \int_M [(n-1) H + 2 (H-\lambda)(|A|^2-H^2)]dM=0.
\end{equation}

Now, using  that $H^2\leq n|A|^2\leq 2n$ we conclude that
\begin{eqnarray*}
    (n-1) H + 2 (H-\lambda)(|A|^2-H^2) &\ge& 
    (n-1)H-2(H-\lambda) (n-1)|A|^2 \\
    &\ge& (n-1)H-(n-1)(H-\lambda) \\
    &=& (n-1) \lambda \ge0,
\end{eqnarray*}
In view of identity (\ref{last}) we conclude that all inequalities above
are in fact identities. In particular, $M$ is a round sphere.
\end{proof}

\noindent
{\bf Acknowledgments: }
The author would like to express his gratitude to 
Prof. Marcos P. Cavalcante for suggestions and 
much encouragement.

\bibliographystyle{amsplain}
\bibliography{bib}

\providecommand{\bysame}{\leavevmode\hbox to3em{\hrulefill}\thinspace}
\providecommand{\MR}{\relax\ifhmode\unskip\space\fi MR }
\providecommand{\MRhref}[2]{%
  \href{http://www.ams.org/mathscinet-getitem?mr=#1}{#2}
}
\providecommand{\href}[2]{#2}
\begin{thebibliography}{10}

\bibitem{Aleksandrov}
A.~D. Aleksandrov, \emph{Uniqueness theorems for surfaces in the large. {I}},
  Vestnik Leningrad. Univ. \textbf{11} (1956), no.~19, 5--17. \MR{0086338}

\bibitem{alencar1998integral}
Hilario Alencar and A.~Gervasio Colares, \emph{Integral formulas for the
  {$r$}-mean curvature linearized operator of a hypersurface}, Ann. Global
  Anal. Geom. \textbf{16} (1998), no.~3, 203--220. \MR{1626663}

\bibitem{alias2004first}
Luis~J. Al\'{\i}as and J.~Miguel Malacarne, \emph{On the first eigenvalue of
  the linearized operator of the higher order mean curvature for closed
  hypersurfaces in space forms}, Illinois J. Math. \textbf{48} (2004), no.~1,
  219--240. \MR{2048223}

\bibitem{cheng2018complete}
Qing-Ming Cheng and Guoxin Wei, \emph{Complete {$\lambda$}-hypersurfaces of
  weighted volume-preserving mean curvature flow}, Calc. Var. Partial
  Differential Equations \textbf{57} (2018), no.~2, Paper No. 32, 21.
  \MR{3763110}

\bibitem{colding2012generic}
Tobias~H. Colding and William~P. Minicozzi, II, \emph{Generic mean curvature
  flow {I}: generic singularities}, Ann. of Math. (2) \textbf{175} (2012),
  no.~2, 755--833. \MR{2993752}

\bibitem{da2016stability}
Jonatan~F. da~Silva, Henrique~F. de~Lima, and Marco Antonio~L. Vel\'{a}squez,
  \emph{The stability of hypersurfaces revisited}, Monatsh. Math. \textbf{179}
  (2016), no.~2, 293--303. \MR{3449407}

\bibitem{deshmukh2013note}
Sharief Deshmukh, \emph{A note on hypersurfaces of a {E}uclidean space}, C. R.
  Math. Acad. Sci. Paris \textbf{351} (2013), no.~15-16, 631--634. \MR{3119890}

\bibitem{guo2018scalar}
Zhen Guo, \emph{Scalar curvature of self-shrinker}, J. Math. Soc. Japan
  \textbf{70} (2018), no.~3, 1103--1110. \MR{3830801}

\bibitem{H}
Heinz Hopf, \emph{Differential geometry in the large}, second ed., Lecture
  Notes in Mathematics, vol. 1000, Springer-Verlag, Berlin, 1989, Notes taken
  by Peter Lax and John W. Gray, With a preface by S. S. Chern, With a preface
  by K. Voss. \MR{1013786}

\bibitem{Hsiang}
Wu-yi Hsiang, Zhen~Huan Teng, and Wen~Ci Yu, \emph{New examples of constant
  mean curvature immersions of {$(2k-1)$}-spheres into {E}uclidean
  {$2k$}-space}, Ann. of Math. (2) \textbf{117} (1983), no.~3, 609--625.
  \MR{701257}

\bibitem{Ka2}
Nicolaos Kapouleas, \emph{Compact constant mean curvature surfaces in
  {E}uclidean three-space}, J. Differential Geom. \textbf{33} (1991), no.~3,
  683--715. \MR{1100207}

\bibitem{Ka3}
Nikolaos Kapouleas, \emph{Constant mean curvature surfaces constructed by
  fusing {W}ente tori}, Invent. Math. \textbf{119} (1995), no.~3, 443--518.
  \MR{1317648}

\bibitem{Ros87}
Antonio Ros, \emph{Compact hypersurfaces with constant higher order mean
  curvatures}, Rev. Mat. Iberoamericana \textbf{3} (1987), no.~3-4, 447--453.
  \MR{996826}

\bibitem{rosenberg1993}
Harold Rosenberg, \emph{Hypersurfaces of constant curvature in space forms},
  Bull. Sci. Math. \textbf{117} (1993), no.~2, 211--239. \MR{1216008}

\bibitem{ross2015rigidity}
John~Daniel Ross, \emph{Rigidity results of lambda-hypersurfaces}, Ph.D.
  thesis, Johns Hopkins University, 2015.

\bibitem{steele2004cauchy}
J.~Michael Steele, \emph{The {C}auchy-{S}chwarz master class}, MAA Problem
  Books Series, Mathematical Association of America, Washington, DC; Cambridge
  University Press, Cambridge, 2004, An introduction to the art of mathematical
  inequalities. \MR{2062704}

\bibitem{W}
Henry~C. Wente, \emph{Counterexample to a conjecture of {H}. {H}opf}, Pacific
  J. Math. \textbf{121} (1986), no.~1, 193--243. \MR{815044}

\end{thebibliography}

\end{document}